\documentclass{amsart}

\usepackage[english]{babel}
\usepackage[utf8]{inputenc}
\usepackage{lmodern}
\usepackage{microtype}

\usepackage{mathtools}
\usepackage{amssymb,amsthm,amsfonts,amsmath}

\usepackage{enumitem}

\usepackage{tikz-cd}
\tikzcdset{arrow style=math font}

\numberwithin{equation}{section}

\newtheorem{theorem}{Theorem}[section]

\newtheorem{proposition}[theorem]{Proposition}
\newtheorem{corollary}[theorem]{Corollary}

\theoremstyle{definition}
\newtheorem{definition}[theorem]{Definition}

\theoremstyle{remark}
\newtheorem{remark}[theorem]{Remark}

\usepackage{hyperref}

\newcommand{\Z}{\mathbb{Z}}

\newcommand{\N}{\mathbb{N}}
\newcommand{\C}{\mathbb{C}}
\newcommand{\T}{\mathfrak{T}}
\newcommand{\A}{\mathfrak{A}}
\newcommand{\Ideal}{\mathfrak{I}}
\newcommand{\KK}{\mathfrak{KK}}
\newcommand{\K}{\mathrm{K}}
\newcommand{\B}{\mathfrak{B}}
\newcommand{\im}{\mathrm{i}}
\newcommand{\Abtwo}{\mathfrak{Ab}^{\Z/2}_{\textup c}}
\newcommand{\Modtwo}[1][\mathfrak{C}^{\textup{op}}]{\mathfrak{Mod}^{\Z/2}_{\textup c}#1}

\newcommand\restr[2]{\ensuremath{\left.#1\right|_{#2}}}

\newcommand*{\defeq}{\mathrel{\vcenter{\baselineskip0.5ex \lineskiplimit0pt
                     \hbox{.}\hbox{.}}}%
                     =}

\newcommand*{\nbd}{\nobreakdash}

\begin{document}

\title{Relative homological algebra for bivariant K-theory}
\author{George Nadareishvili}
\thanks{Supported by Shota Rustaveli National Science Foundation of Georgia, FR-18-10849.}
\thanks{George Nadareishvili, A.~Razmadze Mathematical Institute, gnadare@gwdg.de}

\begin{abstract}
This survey article on relative homological algebra in bivariant K\nbd-thoery is mainly intended for readers with a background knowledge in triangulated categories. We briefly recall the general theory of relative homological algebra in triangulated categories and latter specialize it to the non\nbd-equivariant and the equivariant bivariant K\nbd-thoery, where the actions on C*\nbd-algebras is by a finite cyclic group. We conclude by the explicit computation of the universal abelian invariant for separable C*\nbd-algebras with the action of~$\Z/4$ by automorphisms.
\end{abstract}

\maketitle

\section{Introduction} 

Given a locally compact Hausdorff space~$X$, one can consider the algebra of continuous functions from~$X$ to complex numbers vanishing at infinity. This construction is functorial and induces the contravariant equivalence between the category of locally compact Hausdorff spaces and the category of commutative C*\nbd-algebras with appropriate morphisms (see for example~\cite{WO93}). Subsequently, every property of a locally compact Hausdorff space can be
expressed in terms of the function algebra, formulation of which will then usually extend to any (noncommutative) C*\nbd-algebra. In this way, C*\nbd-algebra theory can
successfully be regarded as ``noncommutative topology''.

Similar to topological spaces, the key component of noncommutative topology is the study of C*\nbd-algebras using homology functors. However, in case of  C*\nbd-algebras, up to mild assumptions, there is a universal object among such theories. In particular, by the work of Higson~\cite{Hig87}, there exists a universal category $\mathfrak{KK}$, admitting a functor from the category of all separable C*\nbd-algebras, such that any stable, additive, split short exact sequence preserving functor into an additive category factors through $\mathfrak{KK}$. The similar statements (with respect to appropriate versions of $\mathfrak{KK}$) hold if we endow C*\nbd-algebras with an additional structure. Initially defined by Genadi Kasparov~\cite{Kas88} using functional analysis of Fredholm bimodules, the universal category\nbd-theoretic characterizations exhibit these categories (also referred to as Kasparov categories, bivariant K\nbd-theories or KK\nbd-theories) as an indispensable tool in the study of C*\nbd-algebras.

Arising as homotopy categories, Kasparov categories are naturally triangulated~\cite{MN06}. 

The techniques of triangulated categories proved fruitful for bivariant K\nbd-theory. Most notably in connection to classification programs~\cite{Del11, Del14, DM21, MN10, MN12, Nad19} and Baum\nbd--Connes conjecture~\cite{MN06}.

Classification in KK\nbd-theories usually involves applying the tools of relative homological algebra. It proceeds by functorially pushing computations from a triangulated domain to an abelian setting, where more tools are available to tackle a problem. Results of this nature include (but are not limited to) spectral sequences for derived functor computation, universal coefficient theorems and, consequentially, subcategory classifications.

In Section~\ref{sec:hom} we will recall the known facts from relative homological algebra in triangulated categories. The theory is exceptionally rich,  as demonstrated when first explored by of J. Daniel Christensen~\cite{Chr98} and Apostolos Beligiannis~\cite{Bel00}.  

In Section~\ref{sec:biv} we will overview the triangulated structure of equivariant bivariant K\nbd-theory and specialize the subject and techniques developed in Section~\ref{sec:hom}. The relative homological algebra for bivariant K-theory was developed by Ralf Meyer and Ryszard Nest in~\cite{Mey08, MN10}.

To demonstrate an example, we will conclude by an explicit computation of the universal abelian invariant for the KK-category of C*\nbd-algebras with the action of cyclic group of order $4$.

\section{Relative homological algebra in triangulated categories}\label{sec:hom}

The facts recalled in this section can be found in~\cite{MN10} and~\cite{Bel00}.

\begin{definition}
An additive category $\mathfrak{A}$ is called \emph{stable} if it is equipped with an automorphism \[\Sigma_{\mathfrak A}\colon \mathfrak{A}\to \mathfrak{A}.\]
\end{definition}
For example, given an abelian category $\mathfrak{A}$, we can construct a $\Z/k$\nbd-graded (we also allow $k=0$, by defining $\Z/0\defeq \Z$) stable abelian category $\mathfrak{A}^{\Z/k},$ by considering the category product $\mathfrak{A}^{\Z/k}=\prod_{i\in \Z/k} \mathfrak{A}$ and the suspension functor $\Sigma_{\mathfrak{A}^{\Z/k}}$ that shifts the $i$th component of objects (and morphisms) one place to the left. 

\begin{definition}
\emph{A stable homological functor} is a  homological functor into a stable abelian category that commutes with the suspension functor.
\end{definition}
For example, to a homological functor $H\colon\T\to \A$ from some triangulated category $\T$  into an abelian category $\A$, we can associate a stable homological functor $H_*\colon\T\to\A_{\Z/k}$ by defining 
\begin{equation} \label{eq:sta}
H_*(A)_i=H_i(A)\defeq H(\Sigma_{\A_{\Z/k}}^{-i}X)
\end{equation} 
for the $i$th component of the object $H_*(A)\in\A_{\Z/k}.$

Homological algebra in the non\nbd-abelian setting is always relative; that is, one needs an additional structure to get started. For triangulated categories, this structure can be given by fixing a class of morphisms.

\begin{definition}
  A collection of subgroups $\mathfrak{I}(A,B)\subseteq \mathfrak{T}(A,B)$ for all pairs of objects $A,B$  in a triangulated category $\mathfrak{T}$, such that \[\mathfrak{T}(C,D)\circ \mathfrak{I}(B,C)\circ \mathfrak{T}(A,B) \subseteq \mathfrak{I}(A,D),\] for all $A,B,C,D \in \mathfrak{T},$ is called an \emph{ideal} $\mathfrak{I}$ in $\T$.
  \end{definition} 
  \begin{remark}
  Equivalently, we can think of $\Ideal$ as fixing the subclass of exact triangles
  \[A\to B\to C\xrightarrow{f}\Sigma A\] with $f\in \Ideal(C,\Sigma A)$. These are called \emph{pure} triangles in~\cite{Bel00} ($\Ideal$\nbd-exact triangles below). For readers familiar with extriangulated categories, it should be remarked that under mild assumptions, pure triangles constitute the extriangulated subcategory, when one views $\T$ as an extriangulated category itself~\cite{HZD20}.
  \end{remark}
  As an example of an ideal, consider a homological functor $ H\colon \mathfrak{T} \rightarrow \mathfrak{A}$ into an abelian category~$\mathfrak{A}.$ Then the kernel of $H$ on morphisms
 \[ \ker H(A,B) = \{f \in \mathfrak{T}(A,B) \mid F(f)=0 \}\] clearly defines an ideal in $\T$.
 \begin{definition}
 An ideal $\mathfrak{I}$ is called \emph{homological} if it is the kernel of a stable homological functor.
\end{definition}
 Note that different functors can give rise to the same homological ideal by sharing a kernel. However, the resulting homological algebra will only depend on the ideal itself.
 
We will only deal with homological ideals. As elements of a kernel, the morphisms in $\Ideal$ can be thought of as vanishing relative to $\Ideal$ in $\T$. An application of the homological functor with $\ker H=\Ideal$ explains the following terminology.

\begin{definition}
We call the exact triangle \[ A \xrightarrow{\;f\;} B \xrightarrow{\;g\;}  C \xrightarrow{\;h\;} \Sigma A \]  \emph{$\Ideal$-exact} in $\T$, if $h \in \Ideal( C , \Sigma A)$. In this situation, $f$ is called \emph{$\Ideal$-monic} and $g$ is called \emph{$\Ideal$-epic}.
\end{definition}

Let $\Ideal=\ker H$ be a homological ideal.
\begin{definition}
We say that a chain complex $C_{\bullet}=(C_n,d_n)$ in $\T$ is \emph{$\Ideal$-exact} in degree~$n$ if
\[  H(C_{n+1}) \xrightarrow{H(d_{n+1})}  H(C_n) \xrightarrow{H(d_n)} H(C_{n-1}) \]
is exact at $H(C_n)$.
\end{definition}
We call a chain complex \emph{$\Ideal$-exact} if it is $\Ideal$-exact in every degree.

\subsection{Relative projective objects}

\begin{definition}
A homological functor $H\colon \mathfrak{T} \rightarrow \mathfrak{A}$ is called \emph{$\Ideal$-exact} if $\mathfrak{I} \subseteq \ker H$; that is, $H(f)=0$ for all $f\in \mathfrak{I}(A,B)$. 
\end{definition} 
 
As for abelian categories, we define
\begin{definition}
An object $A \in \mathfrak{T}$ is called \emph{$\mathfrak{I}$-projective} if the functor $\mathfrak{T}(A,-)$ is $\mathfrak{I}$-exact. 
\end{definition} 
 
Let $\mathfrak{P}_{\mathfrak{I}}$ denote the full subcategory of $\mathfrak{I}$\nbd-projective objects in $\T$. $\mathfrak{P}_\Ideal$ is closed under suspensions, desuspensions, retracts and whatever coproducts exist in $\T$.
 
 Having candidates for projective objects, we continue by constructing homological algebra in an analogy to the abelian case. 

\begin{definition}
Let $\Ideal$ be a homological ideal in $\T$ and $A \in \T$. We say that $\pi\colon P \rightarrow A$ is a \emph{one\nbd-step $\Ideal$\nbd-projective resolution} if $\pi$ is $\Ideal$\nbd-epic and $P \in \mathfrak{P}_\Ideal$. An \emph{$\Ideal$\nbd-projective resolution} of $A$ is an $\Ideal$\nbd-exact chain complex 
\[\cdots \rightarrow P_n \rightarrow P_{n-1} \rightarrow \cdots \rightarrow P_0 \rightarrow A\] with $P_n \in \mathfrak{P}_\Ideal$ for all $n \in \mathbb{N}$.
 \end {definition}
 
 We will say that there are \emph{enough $\Ideal$\nbd-projective objects} in $\T$ if every object $A \in \T$ has a one\nbd-step $\Ideal$\nbd-projective resolution. 
 
  Relative projective objects enjoy properties similar to projective objects in an abelian category.
 
 \begin{proposition}[Meyer-Nest {\cite[Proposition 3.26]{MN10}}]
 \label{pro:cha}
 Every object in $\T$ has an $\Ideal$\nbd-projective resolution if and only if $\T$ has enough $\Ideal$\nbd-projective objects. 
 
 Any map between objects of $\T$ can be lifted to a chain map between $\Ideal$\nobreakdash-projective resolutions of these objects and this lifting is unique up to chain homotopy. Two $\Ideal$\nbd-projective resolutions of the same object are chain homotopy equivalent.
 \end{proposition}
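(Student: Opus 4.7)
The plan is to follow the template of the classical Comparison Theorem in abelian algebra, translating the formation of kernels into shifts of cones in $\T$ and using that the exact triangles so obtained are $\Ideal$-exact by construction. For the first claim, the forward direction is trivial, as truncating any resolution to $P_0 \to A$ supplies a one-step resolution. For the converse, suppose there are enough $\Ideal$-projective objects. Given $A$, choose an $\Ideal$-epic $q_0\colon P_0 \to A$ with $P_0 \in \mathfrak{P}_\Ideal$ and extend to an exact triangle $N_0 \xrightarrow{i_0} P_0 \xrightarrow{q_0} A \to \Sigma N_0$, which is $\Ideal$-exact by definition. Iterate: choose an $\Ideal$-epic $q_1\colon P_1 \to N_0$ with $P_1 \in \mathfrak{P}_\Ideal$, set $d_1 \defeq i_0 \circ q_1$, extend $q_1$ to a triangle, and continue. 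Applying any $\Ideal$-exact homological functor $H$ with $\ker H = \Ideal$ to each of these triangles yields short exact sequences $0 \to H(N_n) \to H(P_n) \to H(N_{n-1}) \to 0$, because the connecting maps lie in $\Ideal$; splicing these gives exactness of $H$ applied to $\cdots \to P_n \to \cdots \to P_0 \to A$, so the complex is $\Ideal$-exact.

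For the second claim, construct the chain map $f_\bullet$ inductively. In the base case, apply the functor $\T(P_0, -)$, which is $\Ideal$-exact by projectivity of $P_0$, to the $\Ideal$-exact triangle $N_0^B \to Q_0 \to B \to \Sigma N_0^B$; the induced long exact sequence gives surjectivity of $\T(P_0, Q_0) \to \T(P_0, B)$, so $f \circ q_0^A$ lifts to $f_0$. In the inductive step, a diagram chase shows that the postcomposition $q_{n-1}^B \circ f_{n-1} \circ d_n^A$ vanishes (using the inductive hypothesis $d_{n-1}^B \circ f_{n-1} = f_{n-2} \circ d_{n-1}^A$ together with $q_{n-1}^A \circ i_{n-1}^A = 0$ from the $A$-side triangle, and the injectivity of $\T(P_n, i_{n-2}^B)$ provided by projectivity of $P_n$); hence $f_{n-1} \circ d_n^A$ factors through $i_{n-1}^B\colon N_{n-1}^B \to Q_{n-1}$, and projectivity of $P_n$ against the $\Ideal$-epic $q_n^B\colon Q_n \to N_{n-1}^B$ produces the required $f_n$. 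Uniqueness up to chain homotopy reduces to showing that any chain lift of the zero map is null-homotopic, and the homotopy $s_n\colon P_n \to Q_{n+1}$ is constructed by the same inductive surjectivity argument applied to the syzygies on the $B$-side.

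The third claim is a formal consequence: given resolutions $P_\bullet \to A$ and $Q_\bullet \to A$, lift $\mathrm{id}_A$ to chain maps $\alpha\colon P_\bullet \to Q_\bullet$ and $\beta\colon Q_\bullet \to P_\bullet$; by the uniqueness clause applied to $\mathrm{id}_A$, the compositions $\beta\alpha$ and $\alpha\beta$ are chain homotopic to the respective identity chain maps, so $\alpha$ is a chain homotopy equivalence. The genuine obstacle, and the one place the argument truly departs from its abelian analogue, is the factorization step in the inductive construction of $f_n$: rather than appealing to the literal kernel-image equality in an abelian category, one threads the vanishing condition through two consecutive syzygy triangles, and the key leverage comes from the fact that $\T(P_n, -)$ is $\Ideal$-exact, which is precisely what makes the induction work in the relative triangulated setting.
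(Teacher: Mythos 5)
Your construction of resolutions is exactly the one used by Meyer and Nest: iterate one-step resolutions of the syzygies $N_n$ obtained by rotating the triangle over an $\Ideal$\nbd-epic, and splice the short exact sequences obtained by applying a defining functor $H$ with $\ker H=\Ideal$ (the connecting maps die because they lie in $\Ideal$). That part is complete and correct, as is the purely formal deduction of the third claim from the second.

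The gap is in the comparison step. The proposition asserts that a map $A\to B$ lifts to a chain map into an \emph{arbitrary} $\Ideal$\nbd-projective resolution $Q_\bullet\to B$, but your induction presupposes that each differential $d_n^B$ comes pre-factored as $i_{n-1}^B\circ q_n^B$ through syzygy triangles $N_{n-1}^B\to Q_{n-1}\to N_{n-2}^B\to\Sigma N_{n-1}^B$. An $\Ideal$\nbd-projective resolution is by definition merely an $\Ideal$\nbd-exact complex of $\Ideal$\nbd-projectives; it need not arise from the iterative construction, so these factorizations are not given data. They do always exist, but the fact that every $\Ideal$\nbd-exact chain complex admits such factorizations of its differentials through $\Ideal$\nbd-exact triangles is itself one of the nontrivial preparatory lemmas in~\cite{MN10}, and without it your argument only compares resolutions of syzygy type --- which in particular leaves the third claim (any two resolutions of the same object are homotopy equivalent) unproved in the stated generality. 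Meyer and Nest avoid the issue by showing instead that for $P\in\mathfrak{P}_\Ideal$ the functor $\T(P,-)$ carries every $\Ideal$\nbd-exact chain complex to an exact complex of abelian groups; applying $\T(P_n,-)$ to $Q_\bullet\to B$ then reduces both the lifting and the null-homotopy to the classical comparison theorem in abelian groups, with no hypothesis on how $Q_\bullet$ was built. Your identification of where relative projectivity enters --- the injectivity of $\T(P_n,i_{n-2}^B)$ and the surjectivity of $\T(P_n,q_n^B)$ --- is correct and is the same leverage point; what is missing is the reduction from arbitrary resolutions to ones equipped with syzygy triangles, or equivalently the lemma just described.
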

 
 This allows to define derived functors as in the classical case. Let $F \colon \T \to \A$ be an additive functor into an abelian category $\A$. Denote by $\mathfrak{Ho}(\T)$ and $\mathfrak{Ho}(\A)$ the categories of chain complexes up to chain homotopy of $\T$ and $\A$, respectively.  Applying $F$ termwise to a chain complexes induces a functor $\mathfrak{Ho}(F)\colon \mathfrak{Ho}(\T) \to \mathfrak{Ho}(\A).$ By Proposition~\ref{pro:cha}, construction of projective resolutions defines a
functor $P \colon \T \to \mathfrak{Ho}(\T)$. Denote by $\mathrm{H}_n \colon \mathfrak{Ho}(\A) \to \A$ the $n$th homology functor for $n \in \N$.
 
 \begin{definition}
   For an  additive functor $F \colon \T \to \A$, the composite 
   \[\mathbb{L}_nF\colon \T\xrightarrow{P}\mathfrak{Ho}(\T)\xrightarrow{\mathfrak{Ho}(F)}\mathfrak{Ho}(\A)\xrightarrow{\mathrm{H}_n}\A\]
is called the \emph{$n$th left derived functor of $F$. }

If $F\colon \T^{\mathrm{op}}\to \A$ is now a contravariant additive functor,
the composite 
\[\mathbb{R}^nF\colon \T^{\mathrm{op}}\xrightarrow{P}\mathfrak{Ho}(\T)\xrightarrow{\mathfrak{Ho}(F)}\mathfrak{Ho}(\A)\xrightarrow{\mathrm{H}^n}\A\]
is called the \emph{$n$th right derived functor of $F$.}
\end{definition}

Relative derived functors share many similar properties with their abelian counterparts. There is even a spectral sequence relating a homological functor and its relative derived functors. We are not going to discuss this general construction. We only recall the hereditary case of the Universal Coefficient Theorem, where the spectral sequence collapses to the short exact sequence.

Denote by $\langle \mathfrak{P}_\Ideal\rangle\subseteq\T$ subcategory generated by $\Ideal$\nbd-projective objects $\mathfrak{P}_\Ideal$; that is, the triangulated subcategory closed under whatever coproducts exist in $\T$ .
  
 \begin{theorem}[Meyer-Nest {\cite[Theorem 4.4]{MN10}}] \label{thm:guct}
 Let $\mathfrak{I}$ be a homological ideal in a triangulated category $\mathfrak{T}.$ Let $A\in\T$ have an $\Ideal$\nbd-projective resolution of length one. Suppose also that $A \in \langle \mathfrak{P}_\Ideal\rangle.$  Let $F \colon \T \to \A$ be a homological functor and $\tilde{F} \colon \T^{\textup{op}} \to \A$ a cohomological functor. Then there are natural short exact sequences
 \[0\to\mathbb{L}_0 F(A) \to F(A)\to \mathbb{L}_1 F(\Sigma A)\to 0\]
 \[0\to\mathbb{R}^1 \tilde{F}(\Sigma A) \to \tilde{F}(A)\to \mathbb{R}^0 \tilde{F}( A)\to 0\]
  \end{theorem}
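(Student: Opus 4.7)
The plan is to lift the length-one $\Ideal$\nbd-projective resolution of $A$ to an exact triangle in $\T$ and read the two short exact sequences off the long exact (co)homological sequences attached to that triangle. Concretely, given a resolution $P_1\xrightarrow{d} P_0\xrightarrow{\pi} A$, the map $\pi$ is $\Ideal$\nbd-epic by definition, so I would embed it in an exact triangle
\[P_1'\to P_0\xrightarrow{\pi} A\xrightarrow{\delta}\Sigma P_1'\]
with $\delta\in\Ideal(A,\Sigma P_1')$. Applying Proposition~\ref{pro:cha} to compare the length\nbd-one resolutions $P_1'\to P_0$ and $P_1\to P_0$ of $A$ identifies $P_1'$ with $P_1$ up to chain homotopy equivalence, so that $d$ itself appears as a side of a distinguished triangle.

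Applying the homological functor $F$ to this triangle yields a long exact sequence in $\A$, of which the relevant five\nbd-term piece is
\[F(P_1)\xrightarrow{F(d)} F(P_0)\xrightarrow{F(\pi)} F(A)\xrightarrow{F(\delta)} F(\Sigma P_1)\xrightarrow{F(\Sigma d)} F(\Sigma P_0).\]
Directly from the definition of the left derived functor, $\mathbb{L}_0 F(A)=\operatorname{coker}(F(d))$. Since $\mathfrak{P}_\Ideal$ is closed under suspension, $\Sigma P_1\to\Sigma P_0\to \Sigma A$ is a length\nbd-one $\Ideal$\nbd-projective resolution of $\Sigma A$, so $\mathbb{L}_1 F(\Sigma A)=\ker(F(\Sigma d))$. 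Splicing the five\nbd-term sequence at $F(A)$ then produces the first claimed short exact sequence. Running the dual argument with $\tilde F$ on the same triangle identifies $\mathbb{R}^0\tilde F(A)=\ker(\tilde F(d))$ and $\mathbb{R}^1\tilde F(\Sigma A)=\operatorname{coker}(\tilde F(\Sigma d))$, and splicing at $\tilde F(A)$ gives the second short exact sequence.

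The main obstacle is twofold. First, I need canonicity: the splicing maps should not depend on the choice of one\nbd-step resolution, and they should be natural in~$A$. Both follow from Proposition~\ref{pro:cha}, once one checks that the chain homotopy equivalences lifting morphisms of $\T$ intertwine the connecting maps $\delta$. Second and more conceptually, the assumption $A\in\langle \mathfrak{P}_\Ideal\rangle$ does not visibly enter the triangle argument itself; it is the convergence condition for the general Adams\nbd-type spectral sequence $\mathbb{L}_p F_q(A)\Rightarrow F_{p+q}(A)$. The hereditary hypothesis collapses this spectral sequence to a two\nbd-step filtration, while membership in $\langle \mathfrak{P}_\Ideal\rangle$ guarantees that the abutment is genuinely $F(A)$ and not a proper subquotient of it. I expect the cleanest writeup to derive the exact sequences directly from the triangle as sketched, and to invoke the spectral sequence only as conceptual justification for the subgenerating hypothesis.
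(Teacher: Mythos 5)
The crux of your argument is the identification of the cone $P_1'$ of $\pi\colon P_0\to A$ with the projective object $P_1$, and that is exactly where the proof breaks down. Proposition~\ref{pro:cha} compares \emph{$\Ideal$-projective} resolutions, so to invoke it you would first have to know that $P_1'$ is $\Ideal$-projective, and nothing guarantees this. All the triangle $P_1'\to P_0\xrightarrow{\pi}A\xrightarrow{\delta}\Sigma P_1'$ with $\delta\in\Ideal$ gives you is that $0\to P_1'\to P_0\to A\to 0$ is $\Ideal$-exact, i.e.\ that $P_1'$ is \emph{a} syzygy of $A$; a syzygy need not be projective even when a projective one exists. Concretely, take $A\neq 0$ with $H(A)=0$ for the defining functor $H$ of $\Ideal$ (in $\KK$ with $\Ideal=\ker\mathrm{K}_*$: a nonzero separable C*-algebra with vanishing K-theory outside the bootstrap class). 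Then $P_1=P_0=0$ is a projective resolution of length one, while $P_1'\cong\Sigma^{-1}A\neq 0$, and the conclusion of the theorem is false for such $A$ (it would force $F(A)=0$ for every homological $F$). This shows that the hypothesis $A\in\langle\mathfrak{P}_\Ideal\rangle$ must enter the triangle argument itself, contrary to your closing paragraph; it cannot be outsourced to a spectral-sequence convergence statement.

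The argument of Meyer--Nest runs in the opposite direction. Embed $d\colon P_1\to P_0$ (not $\pi$) in an exact triangle $P_1\xrightarrow{d}P_0\to\tilde A\to\Sigma P_1$, so that $\tilde A\in\langle\mathfrak{P}_\Ideal\rangle$ by construction. Since $\pi\circ d=0$, the map $\pi$ factors through a morphism $\tilde A\to A$; because the resolution has length one, $H(P_1)\to H(P_0)$ is injective, so the long exact sequence shows $H(\tilde A)\xrightarrow{\;\cong\;}H(A)$, i.e.\ the cone $N$ of $\tilde A\to A$ satisfies $H(N)=0$ and is therefore $\Ideal$-contractible. As $N$ also lies in $\langle\mathfrak{P}_\Ideal\rangle$ (here the hypothesis $A\in\langle\mathfrak{P}_\Ideal\rangle$ is used), and $\T(X,M)=0$ whenever $X\in\langle\mathfrak{P}_\Ideal\rangle$ and $M$ is $\Ideal$-contractible, we get $\T(N,N)=0$, hence $N=0$ and $\tilde A\cong A$. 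Only after this identification does your five-term argument apply; the identifications $\mathbb{L}_0F(A)=\operatorname{coker}F(d)$, $\mathbb{L}_1F(\Sigma A)=\ker F(\Sigma d)$ and the splicing, as well as the naturality discussion, are then fine. Your write-up needs to replace the appeal to Proposition~\ref{pro:cha} with this step.
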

  
  \begin{remark} \label{rem:uct}
  If, for example, we take $\tilde{F}=\T(\text{--},B)$ for any $B\in\T$ and denote $  \operatorname{Ext}_{\T,\Ideal}^n\defeq \mathbb{R}^n \tilde{F}$, under the assumptions of Theorem~\ref{thm:guct}, we get the short exact sequence
  \begin{equation}\label{eq:uct}
  0\to\operatorname{Ext}_{\T,\Ideal}^1(\Sigma A,B) \to \T(A,B)\to \operatorname{Ext}_{\T,\Ideal}^0(A,B)\to 0.
  \end{equation}
  \end{remark}
  
\subsection{The universal \texorpdfstring{$\Ideal$\nbd-}exact functor}

By the classical construction of Peter Freyd~\cite{Fre66}, any triangulated category admits a universal homological functor $U$ into an abelian category of finitely presented functors, such that any other homological functor uniquely factors through $U$ with an exact functor (unique up to natural isomorphism) between abelian categories.  

Notably, the  relative version of this statement is also true.

 \begin{definition} An $\mathfrak{I}$-exact homological functor $F$ is \emph{universal}, if any other $\mathfrak{I}$-exact homological functor $G\colon\mathfrak{T} \rightarrow \mathfrak{A'}$ factors as $G=\bar{G}\circ F$ with an exact functor $\bar{G}\colon\mathfrak{A} \rightarrow \mathfrak{A'}$ that is unique up to natural isomorphism.
 \end{definition}
  
 \begin{theorem}[Beligiannis {\cite[Section 3]{Bel00}}]
  For every homological ideal $\Ideal$ in a triangulated category $\T$, there exists an abelian category $\mathcal{A}_{\Ideal}(\T)$ and a universal $\mathfrak{I}$-exact stable homological functor $F\colon \T \rightarrow \mathcal{A}_{\Ideal}(\T)$.
 \end{theorem}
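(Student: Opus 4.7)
The plan is to realise $\mathcal{A}_\Ideal(\T)$ as a Gabriel quotient of Peter Freyd's universal abelian category attached to $\T$. Recall first the absolute case: if $\mathcal{A}(\T)$ denotes the category of finitely presented additive functors $\T^{\textup{op}}\to \mathfrak{Ab}$, then the Yoneda embedding $U\colon\T\to \mathcal{A}(\T)$, $A\mapsto \T(-,A)$, is a homological functor through which every homological $G\colon \T\to\mathfrak{A}'$ factors, up to unique natural isomorphism, as $G=\bar G\circ U$ with $\bar G$ exact. The suspension $\Sigma_\T$ induces an autoequivalence of $\mathcal{A}(\T)$ intertwined with $U$, so $U$ is automatically stable.

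Next, I would identify the correct Serre subcategory $\mathcal S_\Ideal\subseteq \mathcal{A}(\T)$ to quotient out. Take $\mathcal S_\Ideal$ to be the smallest Serre subcategory containing $\operatorname{im}U(f)$ for every $f\in\Ideal$; equivalently, it consists of the objects annihilated by every exact extension $\bar G$ of an $\Ideal$-exact homological functor. Because $\Ideal$ is $\Sigma_\T$-stable (being the kernel of a stable homological functor), the class of generators is preserved by the induced suspension, and hence so is $\mathcal S_\Ideal$. Now form the Gabriel quotient $\mathcal{A}_\Ideal(\T)\defeq \mathcal{A}(\T)/\mathcal S_\Ideal$, which is abelian, carries an induced automorphism, and comes with an exact quotient functor $Q$. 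Set $F\defeq Q\circ U$; then $F$ is a stable homological functor, and $\Ideal$-exact by construction because for $f\in\Ideal$ the image of $U(f)$ lies in $\mathcal S_\Ideal$, forcing $F(f)=0$.

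For the universal property, let $G\colon\T\to\mathfrak{A}'$ be any $\Ideal$-exact stable homological functor. Freyd supplies an exact $\bar G\colon\mathcal{A}(\T)\to\mathfrak{A}'$ with $\bar G\circ U\cong G$, unique up to natural isomorphism. The hypothesis $G(f)=0$ for $f\in\Ideal$ gives $\bar G(\operatorname{im}U(f))=0$, and since the kernel of an exact functor is automatically a Serre subcategory, $\mathcal S_\Ideal\subseteq \ker\bar G$. The universal property of Gabriel quotients then yields a unique exact $\tilde G\colon\mathcal{A}_\Ideal(\T)\to\mathfrak{A}'$ with $\tilde G\circ Q=\bar G$; commutation with suspensions follows because both $Q$ and $\bar G$ commute with them.

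The main obstacle is controlling $\mathcal S_\Ideal$ concretely enough to make the above go through, and in particular checking that it really is a Serre subcategory whose Gabriel quotient behaves well. This exploits the fact that every object of $\mathcal{A}(\T)$ admits a presentation $U(B)\xrightarrow{U(f)}U(C)\to X\to 0$ coming from a triangle $A\xrightarrow{f}B\to C\to \Sigma A$ in $\T$, so closure of $\mathcal S_\Ideal$ under subobjects, quotients and extensions reduces to tracking which $f$'s remain in $\Ideal$ under the triangulated operations that produce such presentations. Once this bookkeeping is in place, stability under $\Sigma$ and the universal property follow formally, which is essentially the strategy carried out by Beligiannis.
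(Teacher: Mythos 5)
Your construction---Freyd's category of finitely presented functors followed by the Gabriel quotient at the Serre subcategory generated by the images $\operatorname{im}U(f)$ for $f\in\Ideal$---is exactly the route the paper indicates in the remark after the theorem (and is Beligiannis's own), and your checks of $\Ideal$-exactness, stability under $\Sigma$, and the universal property via the composite of Freyd's and Gabriel's universal properties are sound. Only cosmetic quibbles: the presentation in your last paragraph should read $U(A)\xrightarrow{U(f)}U(B)\to X\to 0$ for $f\colon A\to B$, and since the smallest Serre subcategory containing a given class of objects always exists, the ``bookkeeping'' you worry about is needed only for a concrete description of $\mathcal{S}_{\Ideal}$, not for existence.
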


 Here, if no set\nbd-theoretic issues arise, the category $\mathcal{A}_{\Ideal}(\T)$ can be obtained by localizing the category of finitely presented functors at the Serre subcategory, where we quotient out all morphisms coming from the ideal $\Ideal$. 
 
 Under nonrestrictive assumptions, the universal $\mathfrak{I}$\nbd-exact homological functor identifies homological algebra in a target abelian category to the $\mathfrak{I}$\nbd-relative homological algebra in the domain triangulated category and allows the computation of relative derived functors using derived functors in the universal abelian category. More precisely, the following results hold.
 
 \begin{theorem}[Beligiannis {\cite[Proposition 4.19]{Bel00}}] \label{thm:bel}
 Let $\mathfrak{I}$ be a homological ideal in a triangulated category $\mathfrak{T}$ and let $F\colon \mathfrak{T} \rightarrow \mathfrak{A}$ be a universal $\mathfrak{I}$\nbd-exact  homological functor into an abelian category $\mathfrak{A}$. Suppose that idempotent morphisms in $\T$ split and that there are enough $\Ideal$\nbd-projective objects in $\T$. Then there are enough projective objects in $\A$ and $F$ induces an equivalence between the full subcategories of $\mathfrak{I}$\nbd-projective objects in $\mathfrak{T}$ and of projective objects in $\mathfrak{A}$.
 \end{theorem}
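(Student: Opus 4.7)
The plan is to proceed in three steps: first show that $F$ carries $\Ideal$\nbd-projectives to projectives in $\A$ and is fully faithful when restricted to $\mathfrak{P}_\Ideal$; then use the existence of one\nbd-step $\Ideal$\nbd-projective resolutions in $\T$ to supply enough projectives in $\A$; and finally use the splitting of idempotents in $\T$ to see that every projective in $\A$ already lies in the essential image of $\restr{F}{\mathfrak{P}_\Ideal}$.

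For the first step, take $P\in\mathfrak{P}_\Ideal$. The hom functor $\T(P,-)$ is an $\Ideal$\nbd-exact homological functor to $\mathrm{Ab}$, and its stabilization~\eqref{eq:sta} is a stable $\Ideal$\nbd-exact homological functor $\T(P,-)_{*}\colon \T\to\mathrm{Ab}^{\Z/k}$. Universality of $F$ produces an exact functor $\bar{G}\colon\A\to\mathrm{Ab}^{\Z/k}$, unique up to natural isomorphism, with $\T(P,-)_{*}\cong\bar{G}\circ F$. The element $1_P\in\bar{G}(F(P))_{0}=\T(P,P)$ yields, by Yoneda, a natural transformation $\A(F(P),-)\Rightarrow\bar{G}(-)_{0}$. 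On objects in the essential image of $F$ this transformation is an isomorphism, since any morphism $F(P)\to F(A)$ comes, by universality, from a unique morphism $P\to A$; and since by Beligiannis' construction of $\A$ as a Gabriel quotient of finitely presented functors on $\T$ every object of $\A$ is a cokernel of a morphism between $F$\nbd-images, exactness of both sides forces the isomorphism to extend to all of $\A$. Hence $\A(F(P),-)$ is exact, $F(P)$ is projective in $\A$, and $\A(F(P),F(A))\cong\T(P,A)$, giving full faithfulness of $\restr{F}{\mathfrak{P}_\Ideal}$.

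For the second and third steps, apply $F$ to any one\nbd-step $\Ideal$\nbd-projective resolution $\pi\colon P\to A$: the defining $\Ideal$\nbd-exact triangle has connecting morphism $h\in\Ideal$, so $F(h)=0$ and the long exact sequence obtained from $F$ forces $F(\pi)$ to be epic in $\A$. For arbitrary $X\in\A$, present $X$ as the cokernel of a morphism $F(A')\to F(A)$ and precompose the canonical surjection with $F(\pi)$ to obtain an epimorphism $F(P)\twoheadrightarrow X$ from a projective, so $\A$ has enough projectives. In particular any projective $Q\in\A$ is a retract $s\colon Q\hookrightarrow F(P)$ with left inverse $p$ of some such $F(P)$; the idempotent $e=s\circ p\in\A(F(P),F(P))$ pulls back, by the full faithfulness of Step~1, to an idempotent $\tilde{e}\in\T(P,P)$, and the hypothesis on $\T$ splits it as $P\cong P'\oplus P''$. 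Then $F(P')\cong Q$, and $P'\in\mathfrak{P}_\Ideal$ because $\mathfrak{P}_\Ideal$ is closed under retracts, completing the equivalence $\mathfrak{P}_\Ideal\xrightarrow{\sim}\mathrm{Proj}(\A)$.

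The main obstacle is Step~1: identifying the abstractly produced exact functor $\bar{G}$ with the representable $\A(F(P),-)$. The cleanest route uses the explicit description of $\A$ as a Serre localization of coherent functors on $\T$, where $F(A)$ corresponds to (the image of) $\T(-,A)$ and Yoneda applies after passing to the quotient. Once this representability is in hand, Steps~2 and~3 are essentially formal and parallel Freyd's argument in the absolute case $\Ideal=0$.
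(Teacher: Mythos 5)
The paper does not actually prove this statement; it quotes it from Beligiannis \cite{Bel00} (see also Meyer--Nest \cite{MN10}, Theorems 3.39 and 3.41), so your proposal can only be measured against those sources. Your architecture --- (1) $F$ sends $\Ideal$\nbd-projectives to projectives and is fully faithful on them; (2) one\nbd-step resolutions plus generation of $\A$ by the image of $F$ give enough projectives; (3) idempotent splitting gives essential surjectivity onto the projectives of $\A$ --- is exactly the standard one, and Steps~2 and~3 are correct as written \emph{granted} Step~1.

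The gap is in Step~1, and you have in fact pointed at it yourself without closing it. The sentence ``any morphism $F(P)\to F(A)$ comes, by universality, from a unique morphism $P\to A$'' is circular: universality, as defined, is a factorization property of \emph{functors} and says nothing directly about hom\nbd-sets in $\A$; the bijection $\T(P,A)\cong\A(F(P),F(A))$ is precisely the full faithfulness you are trying to establish. What your Yoneda construction does give for free is only half of what is needed: the composite $\T(P,A)\xrightarrow{F}\A(F(P),F(A))\xrightarrow{\eta_{F(A)}}\bar{G}(F(A))_0=\T(P,A)$ is the identity, so $F$ is split injective on these hom\nbd-groups and $\eta_{F(A)}$ is split surjective; the injectivity of $\eta_{F(A)}$ (equivalently, surjectivity of $F$ on hom\nbd-groups) is the real content. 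Your fallback --- ``Yoneda applies after passing to the quotient'' --- is also not automatic: in a Serre localization of the category of coherent functors, a morphism out of the image of $\T(\text{--},P)$ is an equivalence class of fractions $\T(\text{--},P)\hookleftarrow S\to X$ with $\T(\text{--},P)/S$ in the Serre subcategory, and one must show every such fraction straightens to an honest natural transformation $\T(\text{--},P)\to X$. That straightening is where the $\Ideal$\nbd-projectivity of $P$ and the existence of enough $\Ideal$\nbd-projectives are actually used (it is the ``partial left adjoint'' step in Meyer--Nest's proof of their Theorem~3.39). Until that is supplied, the projectivity of $F(P)$ and the full faithfulness of $\restr{F}{\mathfrak{P}_\Ideal}$ --- and with them the whole theorem --- remain unproved.
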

 
 \begin{theorem}[Meyer-Nest {\cite[Theorem 3.41]{MN10}}] \label{thm:uni}
 Let $\mathfrak{I}$ be a homological ideal in a triangulated category $\mathfrak{T}$ and let $F\colon \mathfrak{T} \rightarrow \mathfrak{A}$ be a universal $\mathfrak{I}$\nbd-exact (stable) homological functor into an abelian category $\mathfrak{A}$. Suppose that idempotent morphisms in $\T$ split and that there are enough $\Ideal$\nbd-projective objects in $\T$.
If $G\colon \T \to \A'$ is any (stable) homological functor, then there is a unique right exact (stable) functor $\bar{G} \colon \A \to \A'$ such that $\bar{G}\circ F(P) = G(P)$ for all $P \in \mathfrak{P}_{\Ideal}$.

The left derived functors of $G$ with respect to $\Ideal$ and of $\bar{G}$ are related by natural isomorphisms $\mathbb{L}_n\bar{G} \circ F(A) = \mathbb{L}_nG(A)$ for all $A \in T, n \in \N$. There is a similar statement for cohomological functors, which specializes to natural isomorphisms
\[ \operatorname{Ext}_{\T,\Ideal}^n(A,B)\cong \operatorname{Ext}_{\A}^n(F(A),F(B)).\]
 \end{theorem}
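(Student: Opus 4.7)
The plan is to first build $\bar G$ from the equivalence supplied by Theorem~\ref{thm:bel}, then show that $F$ carries $\Ideal$\nbd-projective resolutions in $\T$ to ordinary projective resolutions in $\A$; with these two inputs, both the functorial and derived\nbd-functor statements reduce to unwinding definitions.

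First, by Theorem~\ref{thm:bel}, $F$ restricts to an equivalence $\mathfrak{P}_\Ideal \xrightarrow{\sim} \mathrm{Proj}(\A)$, and $\A$ has enough projectives. I would pick a quasi-inverse $E\colon\mathrm{Proj}(\A)\to\mathfrak{P}_\Ideal$ and define $\bar G$ on projectives by $\bar G(Q)\defeq G(E(Q))$; this makes $\bar G\circ F(P) \cong G(P)$ on $\mathfrak{P}_\Ideal$ by construction, and in the stable case it commutes with the suspension because the restriction of $F$ does. To extend to all of $\A$, for each $X\in\A$ I pick a projective presentation $Q_1\to Q_0\to X\to 0$ and set $\bar G(X)\defeq\operatorname{coker}(\bar G(Q_1)\to\bar G(Q_0))$; standard abelian\nbd-category arguments show this is functorial, independent of the chosen presentation, right exact, and unique up to unique natural isomorphism among right exact extensions of $\bar G|_{\mathrm{Proj}(\A)}$.

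Next, for $A\in\T$ with $\Ideal$\nbd-projective resolution $\cdots\to P_1\to P_0\to A$, I would show that $F(P_\bullet)\to F(A)\to 0$ is a projective resolution of $F(A)$ in $\A$: each $F(P_n)$ is projective by Theorem~\ref{thm:bel}, while exactness follows because the chain complex $P_\bullet\to A\to 0$ is $\Ideal$\nbd-exact and $F$ is an $\Ideal$\nbd-exact homological functor, so the homology of $F(P_\bullet\to A\to 0)$ vanishes. Using $\bar G\circ F=G$ on $\mathfrak{P}_\Ideal$ termwise and naturality of resolutions from Proposition~\ref{pro:cha}, I get
\[
\mathbb{L}_n\bar G(F(A)) \;=\; \mathrm{H}_n\bigl(\bar G(F(P_\bullet))\bigr) \;=\; \mathrm{H}_n\bigl(G(P_\bullet)\bigr) \;=\; \mathbb{L}_n G(A),
\]
natural in $A$. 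For the Ext specialization, I apply the cohomological analogue to $G=\T(-,B)$: then $\bar G$ agrees with $\A(-,F(B))$ on projectives, because for $P\in\mathfrak{P}_\Ideal$ the equivalence of Theorem~\ref{thm:bel} gives $\T(P,B)\cong\A(F(P),F(B))$, so the uniqueness in the first part forces $\bar G\cong\A(-,F(B))$, and passing to $\mathbb{R}^n$ and evaluating at $F(A)$ yields $\operatorname{Ext}_{\T,\Ideal}^n(A,B)\cong\operatorname{Ext}_\A^n(F(A),F(B))$.

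The main obstacle will be the step that $F$ carries $\Ideal$\nbd-projective resolutions to genuine projective resolutions in $\A$: projectivity of the $F(P_n)$ is immediate from Theorem~\ref{thm:bel}, but the exactness of $F(P_\bullet)\to F(A)\to 0$ in $\A$ requires unpacking what it means for a chain complex to be $\Ideal$\nbd-exact (each three-term piece sits in an $\Ideal$\nbd-exact triangle up to augmentation), applying the homological, $\Ideal$\nbd-exact $F$ to the associated long exact sequences, and checking that this really gives exactness at every stage. Once this technicality is in place, the remainder of the argument is essentially bookkeeping.
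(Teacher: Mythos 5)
The paper does not prove this theorem; it is quoted verbatim from Meyer--Nest with a citation, so there is no in-text argument to compare against. Measured against the actual proof in \cite{MN10}, your strategy is the standard one and is essentially sound: build $\bar G$ on projectives through the equivalence $\mathfrak{P}_\Ideal\simeq\operatorname{Proj}(\A)$ of Theorem~\ref{thm:bel}, extend by right exactness via projective presentations, and reduce the derived-functor comparison to the fact that $F$ sends $\Ideal$\nbd-projective resolutions to projective resolutions. You also correctly locate the real technical content in that last fact. One thing worth making explicit there: the passage from ``$P_\bullet\to A$ is $\Ideal$\nbd-exact'' to ``$F(P_\bullet)\to F(A)\to 0$ is exact'' is \emph{not} a formal consequence of $\Ideal\subseteq\ker F$; it needs the lemma that (when idempotents split) an $\Ideal$\nbd-exact complex decomposes into $\Ideal$\nbd-exact triangles, to which one then applies the long exact sequences of the homological functor $F$. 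This is precisely where the hypothesis that idempotents split is consumed, and your proposal never says so.

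The one step that is genuinely asserted rather than proved is the claim, used for the $\operatorname{Ext}$ specialization, that $\T(P,B)\cong\A(F(P),F(B))$ for $P\in\mathfrak{P}_\Ideal$ and \emph{arbitrary} $B\in\T$. Theorem~\ref{thm:bel} as stated only gives this when $B$ is itself $\Ideal$\nbd-projective, so ``the equivalence of Theorem~\ref{thm:bel} gives $\T(P,B)\cong\A(F(P),F(B))$'' does not follow as written. The statement is true, but it requires an extra argument from universality: both $B\mapsto\T(P,B)$ and $B\mapsto\A(F(P),F(B))$ are $\Ideal$\nbd-exact homological functors (the latter because $F(P)$ is projective, so $\A(F(P),-)$ is exact), hence both factor through $F$ as exact functors on $\A$; these two exact factorizations agree on $\operatorname{Proj}(\A)$ by Theorem~\ref{thm:bel}, and since $\A$ has enough projectives and exact functors are determined by their restriction to projectives, they agree everywhere. (In \cite{MN10} this is packaged into the characterization of universal $\Ideal$\nbd-exact functors, Theorem~3.39, via a partially defined left adjoint of $F$.) With that lemma supplied, and the naturality of the resulting isomorphism in $F$ checked, the rest of your argument goes through.
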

 
 \begin{remark} \label{rem:uct2}
 In light of Theorem~\ref{thm:uni}, once we have a universal $\Ideal$\nbd-exact homological functor $F\colon\T\to\A$, under the assumptions of Theorem~\ref{thm:guct}, the exact sequence~\eqref{eq:uct} takes the form
\[  0\to\operatorname{Ext}_{\A}^1(F(\Sigma A),F(B)) \to \T(A,B)\to \operatorname{Hom}_{\A}(F(A),F(B))\to 0.\]
\end{remark}

\subsection{An example of the universal \texorpdfstring{$\Ideal$\nbd-}exact functor}\label{sub:uni}

Now, we construct for us the most relevant example of the universal $\Ideal$\nbd-exact stable homological functor.

Fix at most countable set of objects $\mathcal{C}$ in a triangulated category $\mathfrak{T}$ with countable coproducts. Denote by $\mathfrak{I}_{\mathcal{C}}$ the homological ideal defined as the kernel of the functor \[F_{\mathcal{C}}\colon\mathfrak{T} \rightarrow \prod_{C \in \mathcal{C}}\mathfrak{Ab}^{\mathbb{Z}}, \qquad A \mapsto \big(\mathfrak{T}(C,A)\big)_{C \in \mathcal{C}}.\] 
Assume that $F_{\mathcal{C}}(A)$ is countable for all $A \in \mathfrak{T}$.
 
 Let $\mathfrak{C}$ denote a $\mathbb{Z}\,$\nbd-graded pre\nbd-additive full subcategory of $\mathfrak{T}$ on objects $\mathcal{C}$. 
 Denote by $\mathfrak{Mod}(\mathfrak{C}^{\mathrm{op}})_{\textup c}$ the category of functors with countable values $\operatorname{Funct}(\mathfrak{C}^\mathrm{op}, \mathfrak{Ab}^{\Z})_{\textup{c}}$, or equivalently the category of countable graded right modules over the category ring $R^{\mathfrak C}$ of $\mathfrak C$. Then the enrichment of $F_{\mathcal{C}}$ to the functor \[F_{\mathfrak{C}}\colon \mathfrak{T} \rightarrow \mathfrak{Mod}(\mathfrak{C}^{\mathrm{op}})_\textup{c},\] 
 with the right $\mathfrak{C}$-module structure on $\big(\mathfrak{T}(C,A)\big)_{C \in \mathfrak{C}}$ coming from composition of morphisms in $\mathfrak{T}$, is the universal $\mathfrak{I}_{\mathfrak{C}}$-exact stable homological functor~\cite[Theorem 4.4]{MN12}.

\section{Universal invariants for bivariant K-theory}\label{sec:biv}
 
 In what follows, we will show two examples of applications of relative homological algebra to noncommutative topology. 
 
 Both examples are of KK-theory viewed as a category, one with extra structure. KK\nbd-theory is a joint generalization of topological K\nbd-theory and K\nbd-homology for noncommutative spaces. However, keeping the audience in mind, we will define the category in question rather unconventionally, using the universal property mentioned in the introduction and not the more traditional, Fredholm bimodule picture. 
 
 \begin{definition}
 A \emph{C*\nbd-algebra} $A$ is a Banach algebra over complex numbers together with a conjugate linear automorphism ${}^*\colon A\to A$ called involution, such that $(ab)^*=b^*a^*$, $(a^*)^*=a$ and $\lVert a^*a\rVert=\lVert a^*\rVert\lVert a\rVert$ for all  $a,b\in A$.
 
 A ring homomorphism of C*\nbd-algebras that also preserves involution is called a *-homomorphism. 
 \end{definition}
 
 We denote by $\mathfrak{C^*alg}$ the category of C*\nbd-algebras as objects and *-homomorphisms as arrows.
 
 Let $G$ be a locally compact group.
 
 \begin{definition}
  A \emph{$G$\nbd-C*\nbd-algebra} $A$ is  a strongly continuous representation of $G$ by *\nbd-automorphisms $G\to \mathrm{Aut}(A)$.
\end{definition} 
 
 By $G$\nbd-$\mathfrak{C^*alg}$ we denote the category with $G$\nbd-C*\nbd-algebras as objects and $G$~equivariant *\nbd-homomorphisms as arrows. Whenever $G$ is a trivial group we recover the category $\mathfrak{C^*alg}$, so it is sufficient to state definitions for only the equivariant case. For basic properties of the categories of C*\nbd-algebras we refer to~\cite{Dav96} and~\cite{Mey08a}.
 
 As for rings, an \emph{extension} of $G$\nbd-C*\nbd-algebras is a diagram isomorphic to $I\to A\to A/I$ in $G$\nbd-$\mathfrak{C^*alg}$ for some $G$\nbd-invariant ideal $I$ in a $G$\nbd-C*\nbd-algebra $A$. We call an extension \emph{split}, if it has a section in $G$\nbd-$\mathfrak{C^*alg}$. 
 
 Let $\mathfrak{A}$ denote an exact category. If $\mathfrak{A}$ is only additive to begin with, we can endow it with trivial exact category structure with all extensions being split.
 
 \begin{definition}
 A functor $F\colon G\text{-}\mathfrak{C^*alg}\to \mathfrak{A}$ is called \emph{split exact}, if for any split extension $A\xrightarrow{i}B\xrightarrow{p}C $ with section $s\colon C \to B$, the map $\big(F(i),F(s)\big)\colon F(A)\oplus F(C)\to F(B)$ is an isomorphism.
 \end{definition}
 
 We will also use the definition of C*\nbd-stability. For this, we need a monoidal structure on $G$\nbd-$\mathfrak{C^*alg}$. Nevertheless, we will not define it here. We only note that there are two reasonable ways to complete the algebraic tensor product of $G$\nbd-C*\nbd-algebras $A$ and $B$, called \emph{minimal} and \emph{maximal} tensor products respectively. However, for the large class of C*\nbd-algebras these two constructions coincide; we will use the notation $A\otimes B$ whenever this is the case (for details see \cite{Mur90} and \cite{Was94}).
 
 More intuitive definition of C*\nbd-stability is in a non\nbd-equivariant setting. 
 \begin{definition}
 For a rank\nbd-one projection $p\in \mathbb{K}({\ell^2\N})$ in compact operators on $\ell^2\N$, an embedding $i\colon A\to A\otimes \mathbb{K}({\ell^2\N})$ given by $i(a)=a\otimes p$, is called a \emph{corner embedding} of $A$.
 
 A functor $F\colon \mathfrak{C^*alg}\to \mathfrak{A}$ is called \emph{C*\nbd-stable} if any corner embedding induces an isomorphism $F(A)\cong F\big(A\otimes \mathbb{K}({\ell^2\N})\big)$.
 \end{definition} 
 
 The appropriate generalization of  C*\nbd-stability to the equivariant case is the following: 
 
 \begin{definition}
 Given canonical embeddings of any non-zero $G$\nbd-Hilbert spaces $\mathcal{H}_1\rightarrow\mathcal{H}_1\oplus \mathcal{H}_2\leftarrow \mathcal{H}_2,$ a functor $F\colon G\text{\nbd-}\mathfrak{C^*alg}\to \mathfrak{A}$ is \emph{C*\nbd-stable} if it induces the isomorphisms
 \[ F\big(A\otimes\mathbb{K}(\mathcal{H}_1)\big)\xrightarrow{\cong} F\big(A\otimes\mathbb{K}(\mathcal{H}_1\oplus \mathcal{H}_2)\big)\xleftarrow{\cong} F\big(A\otimes\mathbb{K}(\mathcal{H}_2)\big).\]
 \end{definition}
 
 Now we are ready to give a universal category\nbd-theoretic definition of KK-theory. For technical reasons, we will restrict attention to a full subcategory $G$\nbd-$\mathfrak{C^*sep}\subseteq G\text{\nbd-}\mathfrak{C^*alg}$ of \emph{separable} $G$\nbd-C*\nbd-algebras: equivariant C*\nbd-algebras with a countable dense subsets.
 
 \begin{theorem}[Higson {\cite[Theorem 4.5]{Hig87}}]
 There exists the additive category $\mathfrak{KK}^G$ and the universal split-exact C*\nbd-stable functor $\mathrm{KK}^G\colon G\text{\nbd-}\mathfrak{C^*sep}\to \mathfrak{KK}^G.$ 
 \end{theorem}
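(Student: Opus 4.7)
The plan is to follow Higson's original strategy: realize $\mathfrak{KK}^G$ explicitly as Kasparov's equivariant bivariant K\nbd-theory category, verify that the canonical functor $\mathrm{KK}^G$ is split\nbd-exact and C*\nbd-stable, and then establish universality via the Cuntz picture of KK\nbd-theory by quasi\nbd-homomorphisms.

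For the construction, I would take the objects of $\mathfrak{KK}^G$ to be separable $G$\nbd-C*\nbd-algebras and the morphism sets to be the Kasparov groups $\mathrm{KK}^G(A,B)$, with composition given by the Kasparov intersection product; associativity, bilinearity, and existence of units are Kasparov's original results and make $\mathfrak{KK}^G$ additive. The canonical functor sends a $G$\nbd-equivariant $*$\nbd-homomorphism $\varphi\colon A\to B$ to the class of the associated degenerate Fredholm bimodule. Verifying the two axioms is then standard: C*\nbd-stability reduces to the fact that every corner embedding $A\to A\otimes\mathbb{K}(\mathcal{H})$ is a KK\nbd-equivalence, which follows from the equivariant Morita bimodule pairing; and split\nbd-exactness follows from the six\nbd-term exact sequence in KK together with the section, yielding $B\cong A\oplus C$ in $\mathfrak{KK}^G$ whenever $A\to B\to C$ is split.

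The heart of the argument is the universal property. Given any split\nbd-exact, C*\nbd-stable additive functor $F\colon G\text{-}\mathfrak{C^*sep}\to \mathfrak{A}$, one first invokes Cuntz's theorem (in its equivariant version), according to which split\nbd-exactness and C*\nbd-stability together force homotopy invariance. Every class in $\mathrm{KK}^G(A,B)$ can then be represented by a quasi\nbd-homomorphism $\phi_\pm \colon A\rightrightarrows M(B\otimes\mathbb{K})\triangleright B\otimes\mathbb{K}$, and for any such representative split\nbd-exactness canonically produces a morphism $F(A)\to F(B\otimes\mathbb{K})\cong F(B)$, the last identification coming from C*\nbd-stability. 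One defines $\bar F$ by this assignment, verifies by homotopy invariance that it descends from representatives to KK\nbd-classes, and checks compatibility with the Kasparov product via composition of quasi\nbd-homomorphisms.

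The main obstacle will be twofold: (i) producing the equivariant version of Cuntz's picture, namely that every equivariant KK\nbd-class is represented by a quasi\nbd-homomorphism after sufficient stabilization, which requires careful $G$\nbd-equivariant dilation and absorption arguments; and (ii) verifying functoriality of $\bar F$, matching the analytic Kasparov product with composition in $\mathfrak{A}$. The second point is the conceptual core, because here the two mild axioms, split\nbd-exactness and C*\nbd-stability, must be leveraged together to reproduce the full algebraic behaviour of the Kasparov product.
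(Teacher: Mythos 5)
The paper offers no proof of this statement---it is quoted directly from Higson---and your outline faithfully reconstructs the argument of the cited source: realize $\mathfrak{KK}^G$ concretely via Kasparov bimodules and the intersection product, verify the two axioms for the canonical functor, and obtain universality from the Cuntz quasi-homomorphism picture combined with the Cuntz--Higson theorem that $\mathrm{C}^*$-stability plus split-exactness forces homotopy invariance. The one caveat worth recording is that for nontrivial $G$ this is really Thomsen's theorem (\emph{The universal property of equivariant KK-theory}) rather than Higson's, and the equivariant stabilization and absorption issues you flag in obstacle (i) are exactly where that additional work lies.
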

 
 In other words, any C*\nbd-stable and split-exact functor $F\colon G\text{\nbd-}\mathfrak{C^*alg}\to \mathfrak{A}$ will factor uniquely through $\mathfrak{KK}^G$.
 
 \begin{remark}
 Of course, the existence and the universal property defines $\mathfrak{KK}^G$ up to the category theoretic equivalence. We will not need the original, admittedly more practical definition of $\mathfrak{KK}^G.$ We only state that the objects of $\mathfrak{KK}^G$ are the same as of $G\text{\nbd-}\mathfrak{C^*sep}$, namely the separable $G$\nbd-C*\nbd-algebras, and the morphisms are described concretely as the $G$\nbd-equivariant Kasparov group $\mathfrak{KK}^G(A,B)=\mathrm{KK}_0^G(A,B)$ for $A,B\in \mathfrak{C^*sep}.$ The composition is given by the so called \emph{Kasparov product.} We direct the reader interested to learn more about KK-theory to the textbook sources like~\cite{Bla98} or the original paper by Genadi Kasparov~\cite{Kas88}.
 \end{remark}
 
 \subsection{Triangulated structure} 
 As already stated, the category $\mathfrak{KK}^G$ is additive. Coproduct is given by direct sum of $G$\nbd-C*\nbd-algebras. 
 
 Consider the functor
 \[\Sigma \colon \mathfrak{KK}^G \rightarrow \mathfrak{KK}^G \qquad A \mapsto  C_0(\mathbb{R}) \otimes A.\]
 As a consequence of a remarkable theorem by Raul Bott, $\mathrm{KK}^G$ (and thus any C*\nbd-stable and split exact functor) satisfies Bott periodicity, that is, in $\mathfrak{KK}^G$ there are natural isomorphisms
$\Sigma^2(A)\cong A$ for all $A \in \mathfrak{KK}^G$. Therefore $\Sigma$ is an an automorphism up to a natural isomorphism and thus, $\mathfrak{KK}^G$ is stable. We refer to $\Sigma$ as the suspension.  

Let now $A \rightarrow B \rightarrow C$ be an extension of $G$\nbd-C*\nbd-algebras. This extension is called \emph{cp\nbd-split} if there is a $G$\nbd-equivariant, completely positive (see~\cite{Dav96}), contractive section $C \rightarrow B$.

In analogy to topological spaces, the \emph{cone} of a morphism $A \xrightarrow{f} B$ between $G$\nbd-C*\nbd-algebras is defined as 
\[\operatorname{cone}(f)\defeq \{(a,b) \in A \times C_0((0,1],B) \mid f(a)=b(1) \}.\] 

For every cp\nbd-split extension $A \rightarrow B \rightarrow C$, with $A,B,C$ separable $G$\nbd-C*\nbd-algebras, there is a unique $G$\nbd-equivariant map $\Sigma C \rightarrow A$ and an isomorphism $A \xrightarrow{\cong} \operatorname{cone}({B \rightarrow C})$ in $\mathfrak{KK}^G$, such that the following diagram commutes:
           
\[\begin{tikzcd}
\Sigma C \arrow[r]\arrow[d,dash,shift left=.1em] \arrow[d,dash,shift right=.1em]& A\arrow[r]\arrow[d,"\cong"] &B\arrow[d,dash,shift left=.1em] \arrow[d,dash,shift right=.1em]\arrow[r] &C \arrow[d,dash,shift left=.1em] \arrow[d,dash,shift right=.1em] \\
\Sigma C \arrow[r]& \operatorname{cone}(B\to C)\arrow[r] &B\arrow[r] &C 
\end{tikzcd}
               \]
 The first row in the above diagram is called the $G$\nbd-equivariant \emph{extension triangle} of the cp\nbd-split extension $A \rightarrow B \rightarrow C$. 
 
 Now, declare all $4$\nbd-term diagrams in $\mathfrak{KK}^G$ isomorphic to the extension triangle of some cp\nbd-split extension as exact triangles in $\mathfrak{KK}^G$. This way $\mathfrak{KK}^G$ becomes a triangulated category. However, we are faced with the notation problem. As constructed, mapping cone triangles have the form \[\Sigma C\to \operatorname{cone}(f)\to B\xrightarrow{f} C,\] so the arrows point in the opposite direction to the established conventions of triangulated categories. This is explained by the fact that the functor $X\to C_0(X)$ from locally compact spaces to C*\nbd-algebras is contravariant. Nevertheless, this is not really a problem, as in general, $\T^{\mathrm{op}}$ inherits triangulated structure from $\T$ canonically, and moreover $\Sigma\cong \Sigma^{-1}$ in our case.
 
  More details on triangulated structure in $\mathfrak{KK}^G$ can be found in~\cite{MN06}.
 \begin{remark}
 $\mathfrak{KK}^G$ only contains countable coproducts. So, in following subsections, when we will talk about localizing subcategories, we will really mean localizing$_{\aleph_1}$ subcategories as in~\cite{Del10}, that is, triangulated subcategories closed under countable coproducts.
 \end{remark}

\subsection{The bootstrap class example}

We start by the observation that morphism sets in $\mathfrak{KK}^G$ are closely related to K\nbd-theory.
For a compact group $G$, denote by $\mathrm{K}^G_0$ the $G$\nbd-equivariant topological K\nbd-theory. Generalizing Atiyah-Segal's $G$\nbd-equivariant vector bundle K\nbd-cohomology of topological spaces, for any $G$\nbd-C*\nbd-algebra $A$, there is the natural isomorphism~\cite{Bla98} \begin{equation} \label{eq:ati}
\mathfrak{KK}^G(\C,A)=\mathrm{KK}^G_0(\C,A)\cong \mathrm{K}^G_0(A).
\end{equation}
Thus, the natural challenge of noncommutative topology is to compute $\mathrm{KK}$ groups using K\nbd-theoretic invariants. Probably the most famous result in this context is the Universal Coefficient Theorem by Rosenberg and Schochet~\cite{RS87}. We will try to describe this theorem by relative homological algebra. 

Following~\eqref{eq:sta}, we define 
\begin{definition} \label{def:gra} 
 \[\mathrm{KK}^G_n(A,B)\defeq \mathrm{KK}^G_0(A,\Sigma^n(B))\]
 \end{definition}
 Therefore, also $\mathrm{K}^G_n(B)\defeq\mathrm{K}^G_0(\Sigma^n(B))$ by~\eqref{eq:ati}. Since $\Sigma^2\cong \operatorname{Id},$ we get a $\Z/2$\nbd-graded abelian theory in both cases.

Going forward in this subsection, we assume that $G$ is trivial. So, the situation is non\nbd-equivariant and we simply write $\KK$ for the universal split\nbd-exact, C*\nbd-stable category for separable C*\nbd-algebras.

To proceed, we need to restrict to a smaller subcategory of  C*\nbd-algebras in $\KK$.

\begin{definition}
 The \emph{bootstrap class} $\mathfrak{B} \subset \KK$ is the localizing triangulated subcategory in $\KK$ generated by the object $\mathbb{C}\in \KK$; that is, $\mathfrak{B} = \langle \mathbb{C} \rangle.$
\end{definition}
 
Another equivalent way to characterize the bootstrap class is all separable C*\nbd-algebras that are isomorphic to commutative C*\nbd-algebras in $\KK$. The class is large, and most separable C*\nbd-algebras that operator algebraist encounters in daily work are in fact in $\mathfrak{B}$.
 
 Now, fix the generator, the complex numbers, as the single object $\C\in\B$ and consider a $\Z/2$\nbd-graded  representable functor 
 \[\mathrm{KK}_*(\C,B)=\big( \mathrm{KK}_0(\C,B), \mathrm{KK}_1(\C,B)\big)\cong \big( \mathrm{K}_0(B), \mathrm{K}_1(B)\big)=\mathrm{K}_*(B)\] into $\Abtwo,$ the category of $\Z/2$\nbd-graded countable abelian groups with degree preserving homomorphisms. Here countability condition comes from the fact that we are only considering separable C*\nbd-algebras. 
 
 In the notation of Subsection~\ref{sub:uni}, we took $\mathfrak{C}=\{\C\}$, and thus $\Modtwo \cong\Abtwo$ by definition. Therefore, we are left with the K\nbd-theory functor
 \[\mathrm{K}_*\colon \B \longrightarrow \Abtwo \qquad A\mapsto \mathrm{KK}_*(\C,\text{-})\]
 which is the universal stable $\ker \mathrm{K}_*$\nbd-exact functor. Therefore, by Theorem~\ref{thm:uni} the relative derived functors can be computed using honest derived functors in hereditary abelian category $\Abtwo.$  So, Theorem~\ref{thm:guct} and Remark~\ref{rem:uct2}, we derive the celebrated Universal Coefficient Theorem:
 \begin{theorem}[Rosenberg-Schochet~\cite{RS87}] 
 Let $A$ be a separable $\mathrm{C^*}$-algebra. Then for $A \in \mathfrak{B}$,  there is a short exact sequence of $\Z/2$-graded abelian groups
 \[\mathrm{Ext}^1\big(\K_{*+1}(A),\K_*(B)\big)\rightarrowtail \mathfrak{KK}_*(A,B) \twoheadrightarrow \mathrm{Hom}\big(\K_*(A),\K_*(B)\big)\] for every $B \in \KK$. 
  \end{theorem}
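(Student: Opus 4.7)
The plan is to apply Remark~\ref{rem:uct2} directly. I take $\T = \mathfrak{B}$, $\Ideal = \ker \K_*$, and the universal $\Ideal$-exact functor $F = \K_* \colon \mathfrak{B} \to \Abtwo$ as identified at the end of Subsection~\ref{sub:uni}. The cohomological functor is $\tilde F = \mathfrak{KK}_*(-, B)$, restricted to $\mathfrak{B}^{\textup{op}}$. Once Remark~\ref{rem:uct2} applies, it yields the sequence
\[
0 \to \operatorname{Ext}^1_{\Abtwo}\bigl(\K_*(\Sigma A), \K_*(B)\bigr) \to \mathfrak{KK}_*(A, B) \to \operatorname{Hom}_{\Abtwo}\bigl(\K_*(A), \K_*(B)\bigr) \to 0,
\]
and Definition~\ref{def:gra} together with $\Sigma^2 \cong \operatorname{Id}$ rewrites $\K_*(\Sigma A)$ as $\K_{*+1}(A)$, which is the stated theorem.

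To invoke Remark~\ref{rem:uct2} I have to check the hypotheses of Theorems~\ref{thm:guct},~\ref{thm:bel} and~\ref{thm:uni}. Idempotents split in $\mathfrak{B}$ because $\mathfrak{B}$ is localizing in $\mathfrak{KK}$, hence closed under countable coproducts. Enough $\Ideal$-projectives: the object $\C$ is $\Ideal$-projective because $\mathfrak{KK}(\C,-) \cong \K_0$ is tautologically $\Ideal$-exact; the same holds for $\Sigma\C$. For an arbitrary $A \in \mathfrak{B}$, choose a surjection from a free countable $\Z/2$-graded abelian group onto $\K_*(A)$ and realize its source as $\K_*(P)$ for a countable coproduct $P$ of copies of $\C$ and $\Sigma\C$. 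Lifting via representability produces $p \colon P \to A$ with $\K_*(p)$ surjective; the long exact sequence in $\K_*$ attached to the triangle completing $p$ then forces the connecting map $A \to \Sigma N$ into $\Ideal$, so $p$ is $\Ideal$-epic. Finally, $A \in \langle \mathfrak{P}_\Ideal \rangle$ is immediate: $\mathfrak{B} = \langle \C \rangle \subseteq \langle \mathfrak{P}_\Ideal \rangle$ since $\C \in \mathfrak{P}_\Ideal$.

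The main obstacle is verifying that $A$ admits an $\Ideal$-projective resolution of length one; this is where the hereditary nature of $\Abtwo$ enters. Continuing the construction above, complete $p \colon P_0 \to A$ to a triangle $N \to P_0 \to A \to \Sigma N$ with the last arrow in $\Ideal$. The resulting $\K_*$-long exact sequence degenerates into the short exact sequence $0 \to \K_*(N) \to \K_*(P_0) \to \K_*(A) \to 0$, so $\K_*(N)$ embeds in the free $\Z/2$-graded abelian group $\K_*(P_0)$. Because $\Z$ is a principal ideal domain, subgroups of free abelian groups are free, and the $\Z/2$-grading propagates this, so $\K_*(N)$ is projective in $\Abtwo$. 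Theorem~\ref{thm:bel} then identifies $N$ as $\Ideal$-projective, giving the length-one resolution $0 \to N \to P_0 \to A \to 0$. With all hypotheses of Theorem~\ref{thm:guct} in place, Remark~\ref{rem:uct2} produces the desired short exact sequence.
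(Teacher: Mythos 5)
Your overall route is exactly the paper's: identify $\K_*\colon\B\to\Abtwo$ as the universal $\ker\K_*$-exact functor via Subsection~\ref{sub:uni}, verify the hypotheses of Theorems~\ref{thm:guct},~\ref{thm:bel},~\ref{thm:uni}, and read off the sequence from Remark~\ref{rem:uct2}. The paper leaves the verification implicit, and most of what you supply (split idempotents from closure under countable coproducts, $\Ideal$-projectivity of $\C$ and $\Sigma\C$, construction of an $\Ideal$-epic $P\to A$ by representability and the long exact sequence, $\B=\langle\C\rangle\subseteq\langle\mathfrak{P}_\Ideal\rangle$) is correct and is the standard way to fill it in.

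There is one genuine gap: the step ``$\K_*(N)$ is projective in $\Abtwo$, so Theorem~\ref{thm:bel} identifies $N$ as $\Ideal$-projective.'' Theorem~\ref{thm:bel} only says that $F$ restricts to an equivalence $\mathfrak{P}_\Ideal\simeq\operatorname{Proj}(\Abtwo)$; it gives you an $\Ideal$-projective $P_1$ with $\K_*(P_1)\cong\K_*(N)$, but it does not assert that every object whose image is projective lies in $\mathfrak{P}_\Ideal$. Indeed, in the ambient category $\KK$ the corresponding claim is false: a non-bootstrap C*-algebra $N$ with $\K_*(N)=0$ has projective (zero) image, yet $\operatorname{id}_N\in\Ideal$ is not killed by $\KK(N,-)$, so $N$ is not $\Ideal$-projective. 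To conclude $N\cong P_1$ you would additionally need that objects of $\B$ with vanishing $\K$-theory are zero (so that the cone of a $\K_*$-isomorphism $P_1\to N$ vanishes), a fact you never invoke. The standard repair avoids the issue entirely: use representability to produce a map $P_1\to N$ inducing the isomorphism $\K_*(P_1)\cong\K_*(N)$, and take $0\to P_1\to P_0\to A$ as the length-one resolution; its $\Ideal$-exactness follows from the short exact sequence $0\to\K_*(N)\to\K_*(P_0)\to\K_*(A)\to0$ you already established, and one never needs to know whether $N$ itself is $\Ideal$-projective.
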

\begin{remark}
The converse of the theorem is also true, that is, $A \in \mathfrak{B}$ only if the short exact sequence exists for every $B \in \KK$. This is sometimes also used to define the bootstrap class.
\end{remark}
 
 The Universal Coefficient Theorem is very useful as it allows the computation of $\mathrm{KK}$ groups using $\K$\nbd-theory for C*\nbd-algebras in the bootstrap class. This is widely used for classification programs of C*\nbd-algebras or even of different triangulated subcategories as in~\cite{Del11,DM21,Nad19}.
 
 \subsection{Actions of finite groups}
 
 Now we will give an example of the case when the category ring of $\mathfrak{C}$ is not hereditary and some computation is needed to pin down the universal invariant.
 
 Throughout this subsection, let $G$ be a finite group. As before, looking at whole $\KK^G$ is far too complicated, so we will restrict our attention to a smaller subclass of $G$\nbd-C*\nbd-algebras. The correct equivariant bootstrap class is defined as follows~\cite{DEM14}.
 \begin{definition}
 A $G$\nbd-C*\nbd-algebra $A$ is called \emph{elementary} if it is of the form 
 \[\operatorname{Ind}_H^G \mathbb{M}_n\C = \{G\xrightarrow{f}\mathbb{M}_n\C \mid hf(xh)=f(x)\text{ for any } x\in G \text{ and } h\in H\}\] with the $G$\nbd-action $(g f)(x)\defeq f(g^{-1}x)$, $g, x \in G,$ for some subgroup $H \subseteq G$ and some action by automorphisms of $H$ on $n\times n$ matrix algebra $\mathbb{M}_n\C$.
 \end{definition}
 
 \begin{definition}
 The \emph{$G$\nbd-equivariant bootstrap class} $\mathfrak{B}^G \subset \KK^G$ is the localizing triangulated subcategory generated by the elementary $G$\nbd-C*\nbd-algebras in $\KK^G$, that is, 
 \[\mathfrak{B}^G = \langle \operatorname{Ind}_H^G \mathbb{M}_n\C \mid \text{ For all }H\subseteq G\text{ actions on } \mathbb{M}_n\C  \rangle.\]
 \end{definition}
 
 Contrary to $\B$, equivariant bootstrap class is strictly larger than all $G$\nbd-C*\nbd-algebras isomorphic to commutative $G$\nbd-C*\nbd-algebras in $\KK^G$. The latter is too restrictive in the equivariant setting, as it is not even thick as a subcategory. On the other hand, $\mathfrak{B}^G$ as defined above is fairly large, as it is equivalent to all $G$\nbd-C*\nbd-algebras that are isomorphic in $\KK^G$ to a $G$\nbd-action on a type I C*\nbd-algebra (for the definition of Type I see for example~\cite{Bla98} or~\cite{Dav96}).
 
 By Skolem-Noether theorem, each automorphism of complex matrix algebra is inner, thus $\operatorname{Aut}(\mathbb{M}_n\C)\cong \operatorname{GL}_n(\C)/\C\mathrm{I_n}$. Therefore, every action of $H\subseteq G$ on $\mathbb{M}_n\C$ by automorphisms comes from some $n$\nbd-dimensional projective representation
 \[H\to \operatorname{GL}_n(\C)/\C\mathrm{I_n},\]
 which in turn are classified by a cohomology class in $\mathrm{H}^2(H,\mathrm{U}(1))$~\cite{Kar85}. Two actions on $\mathbb{M}_n\C$ are isomorphic in $\KK^G$ if and only if they belong to the same class in $\mathrm{H}^2(H,\mathrm{U}(1)).$ For a finite group, second cohomology is also finite~\cite{Kar85}, thus there is a finite choice (each per cohomology class of a projective representation) of elementary C*\nbd-algebras that generate $\B^G$.
 
 \subsubsection{Actions of finite cyclic groups}
 Now we further assume that $G=C_k,$ the cyclic group of order $k$. Then it is well known that there are no non\nbd-trivial projective representations for $H\subseteq C_k,$~\cite{Kar85} and thus $\operatorname{Ind}_H^{C_k}\C = C(C_k/H) $ are enough to  generate $\B^{C_k}$. 
 
 \begin{remark}\label{rem:res}
 In general, for a subgroup $H\subseteq G$, the construction that assigns $G$\nbd-C*\nbd-algebra $\operatorname{Ind}_H^{G}A$ to the $H$\nbd-C*\nbd-algebra $A$  is functorial. It is the left adjoint to the functor $\operatorname{Res}_H^{G}\colon \KK^G\to\KK^H$,  that restricts a $G$\nbd-action to $H$. So, when computing Yoneda functors represented by generators of $\B^{C_k}$, by~\eqref{eq:ati} we get an equivariant K\nbd-theory
 \[ \mathrm{KK}^G(C(G/H), A)= \mathrm{KK}^G(\operatorname{Ind}_H^{G}\C,A)\cong \mathrm{KK}^H(\C, \operatorname{Res}_H^{G} A)\cong \mathrm{K}_0^H(\operatorname{Res}_H^{G} A).\]
 \end{remark}
 
 Following Subsection~\ref{sub:uni}, to apply the machinery of relative homological algebra, we want to compute the category ring $R^{C_k}$ of the full subcategory $\mathfrak{C}^{C_k}\subset \KK^{C_k}$ on objects \[\{ C(C_k/H)\mid H\subseteq C_k\}.\] 
 
 As knowledgeable reader might have noticed, this looks like the domain for Mackey theory (for Mackey functors see~\cite{Bou97} or a shorter guide~\cite{Web00}). This is indeed the case and the following computation falls completely under Ivo Dell'Ambrogio's work~\cite{Del14}. Unpacking~\cite[Theorem~4.9]{Del14} and adapting it to ring theoretic conventions, we find that the category ring $R^{C_k}$ of $\mathfrak{C}^{C_k}$ is generated by the arrows of the form
 \begin{align*}
    r^H_L &\colon C(C_k/H) \rightarrow C(C_k/L), \\
    i^{H}_{L} &\colon C(C_k/L) \rightarrow C(C_k/H), \\
    c_g^{H} &\colon C(C_k/H) \rightarrow C(C_k/H), \\
    m_{\chi}^{H} &\colon C(C_k/H) \rightarrow C(C_k/H),
\end{align*}
 for all $L\subseteq H\subseteq C_k, g\in C_k$ and complex group  representation $\chi\in\mathrm{Rep}(H)$ of $H$. These are called \emph{ restriction, induction, conjugation, multiplication} respectively and are subject to the relations that follow.
 
 The first six are well known relations from Mackey theory. Let $H\subseteq  C_k$. 
 \begin{enumerate}[label=\textup{(\roman*)}] 
    \item \label{rel:one}\(r^{H}_H=i^{H}_H=1_{C(C_k/H)},\)
    
    \item \label{rel:two}\(c_h^{H}=1_{C(C_k/H)},\) if \(h \in H\),
    
    \item \label{rel:three} \(r^{K}_L \circ r^{H}_{K} =r^{H}_L,\) and \( i^{H}_K \circ i^{K}_L =i^{H}_L\) for \(L\subseteq K\subseteq  H,\)  
    
    \item \label{rel:four}\( c_g^{H}\circ c_h^{H}=c_{hg}^{H},\) for any $g,h\in C_k,$
    
    \item \label{rel:five} \(c_g^{K}\circ r^{H}_{K}  =  r^{H}_{K} \circ c_g^{H}\) and \(i^{H}_{K} \circ c_g^{K} =  c_g^{H} \circ i^{H}_{K}\) for $K\subseteq  H,$
    
    \item \label{rel:six} \( r^{H}_K \circ i^{H}_L=\sum_{g\in[L\backslash H /K]}  i^{K}_{L\cap K}\circ  c_g^{L\cap K}  \circ r_{L\cap K}^{L} \) for \(L,K\subseteq H.\)
    \end{enumerate}

    Then come the two relations pertaining to the multiplication, addition, restriction and conjugation of representations. Let~$\chi$ be a complex group representation of $H$. 
    \begin{enumerate}[resume, label=\textup{(\roman*)}]
    
    \item \label{rel:seven} \( m^{H}_{\chi}\circ m^{H}_{\psi} = m^{H}_{\chi\psi},\) \( m^{H}_{\chi}+ m^{H}_{\psi} = m^{H}_{\chi+\psi},\) and \(m^{H}_{\tau}=1_{C(C_k/H)}\) for every complex representation $\psi$ of $H$ and a trivial representation $\tau$ of $H$. 
   
\item \label{rel:eight} \( m^{L}_{\restr{\chi}{L}}\circ r^{H}_{L} = r^{H}_L\circ m_{\chi}^{H}\) for $L\subseteq H,$ 

\item \label{rel:nine}\(c^{H}_g \circ m^{H}_{\chi}= m_{\chi}^{H}\circ c^{H}_g.\) 

\end{enumerate}
    And finally, we have the so called \emph{Frobenius isomorphisms.} These reflect the ring structure on Mackey functors.
    \begin{enumerate}[resume, label=\textup{(\roman*)}]
    
    \item \label{rel:ten} \( m^{H}_{\chi} \circ i^{H}_{L} =  i^{H}_{L} \circ m^{L}_{\restr{\chi}{L}}\) for $L \subseteq  H,$

\item \label{rel:eleven} \( i^{H}_{L} \circ m^{L}_{\chi}\circ r^{H}_{L} = m_{\operatorname{ind}_L^H \chi}^{H}\) for $L\subseteq  H$, where $\operatorname{ind}_L^H \chi$ denotes the induced representation of $\chi$ to $H$.
    \end{enumerate}
   
   \subsubsection{Actions of the group $C_4$} \label{sub:act}
    The first cyclic group with non\nbd-trivial subgroup structure is $C_4=\{e,a,a^2,a^3\}$. 
    
    There is the single non\nbd-trivial subgroup $\langle a^2\rangle\subseteq C_4$ and thus three generators $C(C_4/\{e\})=C(C_4),\, C(C_4/\langle a^2\rangle)$ and $C(C_4/C_4)=\C$ for the equivariant bootstrap class $\B^{C_4}$. 
    
    $C_4$ has three non\nbd-trivial $1$\nbd-dimensional complex representations $\chi_{\im}^{C_4},$ $\chi_{-1}^{C_4}=(\chi_{\im}^{C_4})^2$ and $\chi_{-\im}^{C_4}=(\chi_{\im}^{C_4})^3$. Here in the subscript we indicate the image of the generator $a\in C_4$ in~$\C$. $\chi_{\im}^{C_4}$ and $\chi_{-\im}^{C_4}$ restrict to the only non\nbd-trivial representation of the subgroup $\langle a^2\rangle,$ which we denote by $\chi^{\langle a^2\rangle}.$
    
 By relations~\ref{rel:one} through~\ref{rel:four} and relation~\ref{rel:seven}, the eleven generators of the category ring $R^{C_4}$ are 
 \[
 \Gamma=\big\{
 1_{C(C_4)},\ 
 c_a^{\{e\}},\ 
 i^{\langle a^2\rangle}_{\{e\}},\ 
 r^{\langle a^2\rangle}_{\{e\}},\ 
 1_{C(C_4/\langle a^2\rangle)},\ 
 c_a^{\langle a^2\rangle},\ 
 m^{\langle a^2\rangle},\ 
 i_{\langle a^2\rangle}^{C_4},\ 
 r_{\langle a^2\rangle}^{C_4},\ 
 1_{\C},\ 
 m_{\im}^{C_4} 
 \big\}.
  \]
 
 The following represents how these generators act.
 \[
    \begin{tikzcd}[column sep = 8em]
    C(C_4)\arrow[r, "i^{\langle a^2\rangle}_{\{e\}}", shift left] \arrow[loop above, "1_{C(C_4)}{,\  }c_a^{\{e\}}"] & C(C_4/\langle a^2\rangle) \arrow[r, "i_{\langle a^2\rangle}^{C_4}", shift left] \arrow[l,"r^{\langle a^2\rangle}_{\{e\}}", shift left] \arrow[loop above, "1_{C(C_4/\langle a^2\rangle)}{,\ }c_a^{\langle a^2\rangle}{,\ }m^{\langle a^2\rangle}"]& \C \arrow[l,"r_{\langle a^2\rangle}^{C_4}", shift left] \arrow[loop above, "1_{\C}{,\ }m_{\im}^{C_4}"]
    \end{tikzcd}
\]

Since $R^{C_4}$ is a category ring, by definition we have that
\begin{enumerate}[start=0]
\item \label{c4rel:zero} for all $x,y\in\Gamma,$ if $x$ and $y$ are not composable in $\mathfrak{C}$, then $xy=0$.
\end{enumerate}
Relations~\ref{rel:two},\ref{rel:four} and~\ref{rel:seven} say that 
\begin{enumerate}[resume]
\item \label{c4rel:one} $(c_a^{\{e\}})^4=1_{C(C_4)}$,\quad $  
 (c_a^{\langle a^2\rangle})^2=(m^{\langle a^2\rangle})^2=1_{C(C_4/\langle a^2\rangle)}$,\quad
 $ (m_{\im}^{C_4})^4=1_{\C}.$
\end{enumerate}
Commutation relations~\ref{rel:five}, \ref{rel:eight}, \ref{rel:nine} and \ref{rel:ten}, together with \ref{rel:one}, \ref{rel:two} and \ref{rel:seven} imply that
\begin{enumerate}[resume]
\item \label{c4rel:two} 
$r^{\langle a^2\rangle}_{\{e\}} c_a^{\langle a^2\rangle} =c_a^{\{e\}}r^{\langle a^2\rangle}_{\{e\}}$,\quad $ c_a^{\langle a^2\rangle}r_{\langle a^2\rangle}^{C_4}=r_{\langle a^2\rangle}^{C_4}$,\quad $i^{\langle a^2\rangle}_{\{e\}}c_a^{\{e\}}= c_a^{\langle a^2\rangle}i^{\langle a^2\rangle}_{\{e\}}$,\\ 
$ i_{\langle a^2\rangle}^{C_4} c_a^{\langle a^2\rangle}=i_{\langle a^2\rangle}^{C_4};$
\quad $m^{\langle a^2\rangle} i^{\langle a^2\rangle}_{\{e\}}=i^{\langle a^2\rangle}_{\{e\}}$,\quad $r^{\langle a^2\rangle}_{\{e\}}m^{\langle a^2\rangle}=r^{\langle a^2\rangle}_{\{e\}}$,\\
$m^{\langle a^2\rangle}r_{\langle a^2\rangle}^{C_4}= r_{\langle a^2\rangle}^{C_4}m_{\im}^{C_4}$,\quad $ i_{\langle a^2\rangle}^{C_4}m^{\langle a^2\rangle}=m_{\im}^{C_4}i_{\langle a^2\rangle}^{C_4}$;\quad $ c_a^{\langle a^2\rangle}m^{\langle a^2\rangle}=m^{\langle a^2\rangle}c_a^{\langle a^2\rangle}$.
\end{enumerate}
There are two classes of double cosets in $[\{e\}\backslash\langle a^2\rangle /\{e\}]$ and $[\langle a^2\rangle\backslash C_4/\langle a^2\rangle]$ with set of representatives $ \{1,a^2\}$ and $ \{1,a\}$ respectively. Therefore, relation~\ref{rel:six} yields
\begin{enumerate}[resume]
\item \label{c4rel:three} $ r^{\langle a^2\rangle}_{\{e\}}i^{\langle a^2\rangle}_{\{e\}} = 1_{C(C_4)}+(c_a^{\{e\}})^2$,\quad $r_{\langle a^2\rangle}^{C_4}i_{\langle a^2\rangle}^{C_4}=1_{C(C_4/\langle a^2\rangle)}+c_a^{\langle a^2\rangle}.
   $
 \end{enumerate}
Finally, to use the last relation~\ref{rel:eleven}, one needs to identify the induced representations of the trivial representations $\mathrm{ind}_{\{e\}}^{\langle a^2\rangle}\tau^{\{e\}}$, $\mathrm{ind}_{\langle a^2\rangle}^{C_4}\tau^{\langle a^2\rangle}$ and of $\mathrm{ind}_{\langle a^2\rangle}^{C_4}\chi^{\langle a^2\rangle}.$ After completing this exercise in linear algebra, one finds that $\mathrm{ind}_{\{e\}}^{\langle a^2\rangle}\tau^{\{e\}} = \tau^{\langle a^2\rangle}+\chi^{\langle a^2\rangle}$, $\mathrm{ind}_{\langle a^2\rangle}^{C_4}\tau^{\langle a^2\rangle} =\tau^{C_4}+(\chi_{\im}^{C_4})^2$ and $\mathrm{ind}_{\langle a^2\rangle}^{C_4}\chi^{\langle a^2\rangle}=\chi_{\im}^{C_4}+(\chi_{\im}^{C_4})^3$. So, by relation~\ref{rel:eleven} and additive part of ~\ref{rel:seven}, we have
\begin{enumerate}[resume]
\item \label{c4rel:four} 
$i^{\langle a^2\rangle}_{\{e\}}r^{\langle a^2\rangle}_{\{e\}}=1_{C(C_4/\langle a^2\rangle)}+m^{\langle a^2\rangle}$,\quad $i_{\langle a^2\rangle}^{C_4}
 r_{\langle a^2\rangle}^{C_4}= 1_{\C}+(m_{\im}^{C_4})^2$,\\
 $ i_{\langle a^2\rangle}^{C_4}m^{\langle a^2\rangle}
 r_{\langle a^2\rangle}^{C_4}=m_{\im}^{C_4}+(m_{\im}^{C_4})^3.$
\end{enumerate}
The generators~$\Gamma$ and relations~\ref{c4rel:zero}, \ref{c4rel:one}, \ref{c4rel:two}, \ref{c4rel:three} and~\ref{c4rel:four} define the ring $R^{C_4}$. 

\begin{corollary}
The functor \[\mathrm{k}_*^G\colon \B^G\to \mathfrak{Mod}_{\textup c}^{\Z/2}(R^{C_4}),\qquad A\mapsto \{\mathrm{K}_{\epsilon}^H(\operatorname{Res}^G_H A)\}^{H\subseteq G}_{\epsilon\in \Z/2}\] into the abelian category of $\Z/2$\nbd-graded countable right modules over the ring $R^{C_4}$ is the universal stable $\ker\mathrm{k}_*^G$\nbd-exact functor.
\end{corollary}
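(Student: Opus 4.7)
The plan is to apply the universal construction recalled in Subsection~\ref{sub:uni} to the triangulated category $\T = \B^{C_4}$ with distinguished set of objects $\mathcal{C} = \{C(C_4/H) \mid H \subseteq C_4\} = \{C(C_4),\ C(C_4/\langle a^2\rangle),\ \C\}$. By the very definition of the equivariant bootstrap class, $\mathcal{C}$ generates $\B^{C_4}$ as a localizing$_{\aleph_1}$ subcategory; since $\B^{C_4}$ has countable coproducts and every $A \in \B^{C_4}$ comes from a separable $C_4$\nbd-C*\nbd-algebra, the representable functors $\mathfrak{KK}^{C_4}(C,A)$ for $C \in \mathcal{C}$ take countable values, so the hypotheses of Subsection~\ref{sub:uni} are satisfied.

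Next I would unpack the enriched universal functor $F_{\mathfrak{C}}$ component by component. Remark~\ref{rem:res} combined with the Atiyah--Segal identification~\eqref{eq:ati} yields natural isomorphisms
\[\mathfrak{KK}^{C_4}(C(C_4/H), A) \;\cong\; \mathrm{K}^H_0(\operatorname{Res}_H^{C_4} A),\]
and passing to the $\Z/2$\nbd-graded enrichment via Definition~\ref{def:gra} and Bott periodicity turns this into exactly the functor $\mathrm{k}_*^G$ of the statement. On the target side, countable $\Z/2$\nbd-graded right $\mathfrak{C}^{C_4}$\nbd-modules coincide tautologically with countable $\Z/2$\nbd-graded right modules over the category ring of $\mathfrak{C}^{C_4}$; the preceding discussion, which specialises Dell'Ambrogio's Mackey-theoretic presentation~\cite[Theorem~4.9]{Del14} to the cyclic group $C_4$, identified this category ring with $R^{C_4}$ via the generators $\Gamma$ and the relations~\ref{c4rel:zero}--\ref{c4rel:four}.

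With both the source functor and the target abelian category so identified, the corollary becomes a direct specialisation of~\cite[Theorem~4.4]{MN12} (recalled at the end of Subsection~\ref{sub:uni}), which asserts that $F_{\mathfrak{C}}$ is the universal $\mathfrak{I}_{\mathcal{C}}$\nbd-exact stable homological functor. The one step requiring genuine care, and which I expect to be the main obstacle to writing out in full, is the convention tracking when passing from $\mathfrak{C}^{C_4}$ to its opposite (needed because right modules correspond to functors on $\mathfrak{C}^{C_4,\mathrm{op}}$): one must check that the Frobenius identities~\ref{rel:ten}--\ref{rel:eleven} unpacked in~\ref{c4rel:four}, the commutation identities in~\ref{c4rel:two}, and the Mackey double-coset identity in~\ref{c4rel:three} are all oriented consistently with this opposite convention, so that composition on the correct side survives the translation into generators of $R^{C_4}$. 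Once this bookkeeping is done, no further computation is required: the universal property of $F_{\mathfrak{C}}$ closes the proof.
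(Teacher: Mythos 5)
Your proposal is correct and follows essentially the same route as the paper's own (very terse) proof: identify the components of the functor via Remark~\ref{rem:res} and the Atiyah--Segal isomorphism, identify the category ring of $\mathfrak{C}^{C_4}$ with $R^{C_4}$ via Dell'Ambrogio's theorem together with the computations of Subsection~\ref{sub:act}, and then invoke the universality statement recalled in Subsection~\ref{sub:uni}. The additional checks you flag (countability of values, and the right-module versus opposite-category bookkeeping) are sound but are left implicit in the paper.
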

\begin{proof}
By Remark~\ref{rem:res}, \[\mathrm{K}_{\epsilon}^H(\operatorname{Res}^G_H A)\cong \mathrm{KK}^G_{\epsilon}(C(G/H),A).\] By~\cite[Theorem 4.9]{Del14}  and computations in this subsection, $R^{C_4}$ is the category ring of $\mathfrak{C}^{C_4}$, thus we arrive at the result as  explained in Subsection~\ref{sub:uni}.
\end{proof}
The ring $R^{C_4}$ is not hereditary, thus there is no Universal Coefficient Theorem. However, as a special case of general theory, there is still a spectral sequence that relates relative derived functors on $\mathfrak{B}^G$ to the derived functors on $\mathfrak{Mod}_{\textup c}^{\Z/2}R^{C_4}$~\cite{Del14}. This can be used for classification purposes. Another direction is to localize the ring in question at different subsets to arrive at the hereditary situation, but this is out of the scope of the current exposition.
\subsubsection{Outlook}
The situation gets more complicated when when group $G$ has non\nbd-trivial projective representations. In this case, the equivariant bootstrap class has generators given by $G$\nbd-C*\nbd-algebras induced from subgroup actions on matrices of dimension bigger than one. Mackey\nbd-like relations still arise, however, in addition, one has to take into account the non\nbd-trivial $2$\nbd-cocycles in $\mathrm{H}^2(H,\mathrm{U}(1))$ for $H\subseteq G$.

\end{document}